\DeclarePairedDelimiter\ceil{\lceil}{\rceil}
\newcommand{\divides}{\mid}
\newcommand{\notdivides}{\nmid}
\newtheorem{theorem}{Theorem}[section]
\newtheorem{definition}[theorem]{Defintion}
\title{A New Operator for Egyptian Fractions}
\author{Keneth Adrian P. Dagal \\ 
  \texttt{kendee2012@gmail.com} \\}
\begin{document}

\maketitle

\begin{abstract}
 This paper introduces a new equation for rewriting two unit fractions to another two unit fractions. This equation is useful for optimizing the elements of an Egyptian Fraction. Parity of the elements of the Egyptian Fractions are also considered. And lastly, the statement that all rational numbers can be represented as Egyptian Fraction is re-established.  
\end{abstract}

\section{Introduction}
 
Unit fractions are fractions of the form $\frac{1}{n}$ for all integers $n > 1$. We define the set $U_f$ to be the collection of all unit fractions.
$$U_f = \left\{ \frac{1}{n} \mid n\in \mathbb{Z^+}-\{ 1 \}\right \}$$

The power set of $U_f$, denoted by $\mathcal{P}(U_f)$, is the set that contains all possible subset of $U_f$. We define the set $X$ be an arbitrary element of $\mathcal{P}(U_f)$ and we \it{partition} \normalfont the set
$\mathcal{P}(U_f)$ in terms of each set's cardinality ($\mid X\mid$) into three sets defined as:

$$\mathcal{P}(U_f) = \mathcal{P}(U_f)_{\mid X\mid < 2} \cup  \mathcal{P}(U_f)_{\mid X\mid \geq 2} \cup \mathcal{P}(U_f)_{\mid X\mid = \infty} $$

The set $\mathcal{P}(U_f)_{\mid X\mid < 2}$ contains the null set and the sets that contains each unit fraction. So, the set $\mathcal{P}(U_f)_{\mid X\mid < 2}$ is of no interest. For the set $\mathcal{P}(U_f)_{\mid X\mid = \infty}$, this is of great interest since our fundamental operation is to add all the elements of $X \in \mathcal{P}(U_f)_{\mid X\mid = \infty}$. So, we can classify all infinite series in the set $X \in \mathcal{P}(U_f)_{\mid X\mid = \infty}$ into either divergent or convergent. Before proceeding to some examples for the set $\mathcal{P}(U_f)_{\mid X\mid = \infty}$, we define some notations for simplicity.\\

For convenience, we define the function $S: X \rightarrow \mathbb{R}$ be $$S(X) = \sum_{x \in X }{x}.$$ And we define the set $N$ be $N= \{n \mid n= x^{-1} \text { for all } x\in X\}.$ Equivalently, $X = \{x \mid x= n^{-1} \text { for all } n\in N\}$. With this, we can redefine the function $S$ as $S:N \rightarrow \mathbb{R}.$ For example, we have $N = \mathbb{Z^+}-\{1\}$. Therefore, $S(N) = \infty$. This is known as the \textit{Harmonic Series} (without the term 1) which is known to be divergent.\\

For simplicity, we use the Riemann zeta function ($\zeta(s)$) and limit the domain of $s$ in $\mathbb{Z^+}$ to illustrate the function $S$ for some $X \in \mathcal{P}(U_f)_{\mid X\mid = \infty}.$ It is known that $\zeta(1)=\infty$, $\zeta(2)=\frac{\pi^2}{6} \approx 1.645$. For the set $N= \{n \mid n= q^2 \text{ for all integers } q\geq 2\}$, $S(N)= \frac{\pi^2}{6} -1$. Setting that aside, the set $\mathcal{P}(U_f)_{\mid X\mid \geq 2}$ is our major concern. 

\begin{definition}
The function $S$ is said to be an Egyptian fraction if $$S(X) = \sum_{x \in X }{x}.$$ for all $X \in \mathcal{P}(U_f)_{\mid X\mid \geq 2}$.
\end{definition}

It has already been established that \textit{Every positive rational number can be represented by an Egyptian Fraction}. We attempt to re-established it in the next section.

\section{The Inverse of the Function S}

In this section, we focus on the function $S$ where the domain is $\mathcal{P}(U_f)_{\mid X\mid \geq 2}$. In this domain, the function then becomes $S: X \rightarrow \mathbb{Q^+}$ where $\mathbb{Q^+}$ is the set of positive rational number

$$\mathbb{Q^+} = \left\{ \frac{a}{b} \mid a, b\in \mathbb{Z^+} \wedge (a,b)= 1 \right\}$$

The notation $(a,b)= 1$ means that $a$ and $b$ are relatively prime. We are interested in the inverse of the function $S$ ( which is not anymore a function) since we can have several $X$'s for a particular element in $\mathbb{Q^+}$. Our question is: 
\textit{Are all elements of $\mathbb{Q^+}$ defined for $S^{-1}$ ?}\\

To answer the question above,  we \it{partition} \normalfont the set $\mathbb{Q^+}$ into two subsets, namely:
 
 $$\mathbb{Q^{\geq}} = \left\{ \frac{a}{b} \mid  \frac{a}{b}\in \mathbb{Q^+} \wedge a \geq b \right\}$$
 
 $$\mathbb{Q^{<}} = \left\{ \frac{a}{b} \mid  \frac{a}{b}\in \mathbb{Q^+} \wedge a < b \right\}$$

One important known theorem is given below:

\begin{theorem}{Division Algorithm \cite{burton}}
 
 Given $a$ and $b$, with $b \neq 0$, there exists unique integers $q$ and $r$ such that $$a= bq+r$$ and $ 0 \leq r < \mid b \mid.$
\end{theorem}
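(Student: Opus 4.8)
The plan is to prove existence and uniqueness separately: the existence half rests on the Well-Ordering Principle, and the uniqueness half on an elementary size estimate. For existence, I would form the set of all nonnegative values of $a - bk$ as $k$ ranges over the integers, namely
$$S = \{\, a - bk \mid k \in \mathbb{Z} \text{ and } a - bk \geq 0 \,\}.$$
The first task is to confirm $S$ is nonempty, so that the Well-Ordering Principle applies. Since $b \neq 0$ gives $|b| \geq 1$, taking $k = -|a|\,\operatorname{sgn}(b)$ yields $a - bk = a + |b|\,|a| \geq a + |a| \geq 0$, so this value lies in $S$. Granting nonemptiness, $S$ has a least element, which I would write as $r = a - bq$ for the corresponding $q$; by construction $r \geq 0$ and $a = bq + r$.

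The crux is then showing $r < |b|$. I would argue by contradiction: if $r \geq |b|$, then the quantity $r - |b| = a - b\bigl(q + \operatorname{sgn}(b)\bigr)$ is again of the form $a - bk$, is still nonnegative, and is strictly smaller than $r$, contradicting the minimality of $r$ in $S$. This step is where the sign of $b$ must be tracked carefully — the integer $q$ is adjusted by $+1$ or $-1$ according to whether $b$ is positive or negative, so that $bk$ increases by exactly $|b|$ — and I expect it to be the main obstacle, since a careless treatment collapses the two sign cases incorrectly or produces the wrong bound.

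For uniqueness, suppose $a = bq_1 + r_1 = bq_2 + r_2$ with $0 \leq r_1, r_2 < |b|$. Subtracting the two representations gives $b(q_1 - q_2) = r_2 - r_1$, and taking absolute values yields $|b|\,|q_1 - q_2| = |r_2 - r_1|$. Since both remainders lie in the interval $[0, |b|)$, their difference satisfies $|r_2 - r_1| < |b|$, which forces $|q_1 - q_2| < 1$. Because $q_1 - q_2$ is an integer, it must equal $0$, whence $q_1 = q_2$, and substituting back gives $r_1 = r_2$. This establishes that the pair $(q, r)$ is unique and completes the argument.
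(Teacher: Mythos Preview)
Your argument is correct and is essentially the standard textbook proof of the Division Algorithm via the Well-Ordering Principle; the handling of the sign of $b$ and the uniqueness bound are both fine. Note, however, that the paper does not actually supply its own proof of this theorem: it states the result with a citation to Burton's \emph{Elementary Number Theory} and uses it as a known tool, so there is no in-paper proof to compare against --- your write-up is precisely the kind of argument one finds in the cited reference.
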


With the previous theorem, we can focus on the set $\mathbb{Q^{<}}$ instead of $\mathbb{Q^+}$ since each element in $\mathbb{Q^{\geq}}$ can be written as $$\frac{a}{b} = q +\frac{r}{b}$$ such that $q$ is an integer and  $\frac{r}{b} \in \mathbb{Q^{<}} $. It is sufficient to show that each element in $\mathbb{Q^{\geq}}$ is defined for $S^{-1}$ by showing $ S^{-1}$ is defined for $\mathbb{Z^+}-\{1\}$ and $\mathbb{Q^{<}}$, and whenever $X_1 \cup X_2$ for all $X_1$ in $\mathbb{Z^+}-\{1\}$ and $X_2$ in $\mathbb{Q^{<}}$ ,  $X_1 \cap X_2= \emptyset$.

We start with $\frac{r}{b} \in \mathbb{Q^{<}} $.

The first splitting recursive equation is what we call the \textit{greedy algorithm for Egyptian fractions.}

\begin{theorem}{Greedy Algorithm}

Let $\frac{a}{b} \in \mathbb{Q^{<}}$,  $a=a_0$, $b=b_0$, $u_{i+1}=\ceil{b_i/a_i}$, $a_{i+1}= a_i\cdot u_{i+1}-b_i$, and $b_{i+1}=b_i \cdot u_{i+1}$
and the recurrence relation below:
$$\frac{a_i}{b_i} =\frac{1}{u_{i+1}} + \frac{a_{i}\cdot u_{i+1}-b_i}{b_i \cdot u_{i+1}}$$.

Initialize at $i=0$. While $a_i \nmid b_i$, add 1 to $i$, and use the recurrence relation until $a_i \mid b_i$. As such, the smallest fraction in the expansion of $\frac{a}{b}$ is $\frac{1}{u_n}$ with $i=n-2$. And the resulting expansion of $\frac{a}{b}$ is $\sum_{i=1}^n\frac{1}{u_i}.$

\end{theorem}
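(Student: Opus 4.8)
The plan is to establish three things in sequence: that the displayed recurrence is an algebraic identity, that the iteration terminates after finitely many steps, and that the unit fractions it produces are genuinely distinct, so that the output lies in the domain $\mathcal{P}(U_f)_{\mid X\mid \geq 2}$. The first of these is immediate: clearing denominators on the right-hand side of
$$\frac{a_i}{b_i} = \frac{1}{u_{i+1}} + \frac{a_i u_{i+1} - b_i}{b_i u_{i+1}}$$
recovers the left-hand side for any choice of $u_{i+1}$, so no arithmetic obstruction arises from the splitting step itself. The real content lies in controlling the iterates $a_i$ and $u_i$.

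For termination, the key is a monovariant: I would show that the numerators $a_0 > a_1 > a_2 > \cdots$ form a strictly decreasing sequence of positive integers. Since $\frac{a_i}{b_i} \in \mathbb{Q^{<}}$ forces $b_i/a_i > 1$, the ceiling $u_{i+1} = \ceil{b_i/a_i}$ satisfies $u_{i+1} \geq 2$ and obeys the two-sided bound
$$u_{i+1} - 1 < \frac{b_i}{a_i} \leq u_{i+1}.$$
The right inequality gives $a_{i+1} = a_i u_{i+1} - b_i \geq 0$, and the left gives $a_i u_{i+1} - b_i < a_i$, that is $a_{i+1} < a_i$. Crucially, the recurrence is only invoked while $a_i \nmid b_i$, so $b_i/a_i$ is never an integer at an active step; the right inequality is then strict, yielding $a_{i+1} \geq 1$. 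A strictly decreasing sequence of positive integers is finite, so the loop halts after at most $a_0$ iterations, at which point $a_i \mid b_i$ and the leftover fraction equals $\frac{1}{u_n}$ with $u_n = b_i/a_i$. I expect this termination argument to be the main point requiring care, since it is exactly where the choice of the ceiling (rather than the floor) is forced.

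Finally, to confirm the terms are distinct I would show the denominators strictly increase, $u_{i+1} < u_{i+2}$. From the same two-sided bound, $\frac{a_i}{b_i} < \frac{1}{u_{i+1} - 1}$, so the remainder satisfies
$$\frac{a_{i+1}}{b_{i+1}} = \frac{a_i}{b_i} - \frac{1}{u_{i+1}} < \frac{1}{u_{i+1} - 1} - \frac{1}{u_{i+1}} = \frac{1}{u_{i+1}(u_{i+1} - 1)}.$$
Taking reciprocals and a ceiling gives $u_{i+2} = \ceil{b_{i+1}/a_{i+1}} > u_{i+1}(u_{i+1} - 1) \geq u_{i+1}$, using $u_{i+1} \geq 2$. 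Hence $u_1 < u_2 < \cdots < u_n$, the unit fractions $\frac{1}{u_i}$ are pairwise distinct, and $\sum_{i=1}^n \frac{1}{u_i}$ is a legitimate Egyptian fraction. Combining the three parts shows every $\frac{a}{b} \in \mathbb{Q^{<}}$ is defined for $S^{-1}$, as claimed. The only subtlety to flag is the index bookkeeping: the final unit fraction $\frac{1}{u_n}$ comes from the stopping fraction rather than from an application of the recurrence, so the last active step occurs at $i = n-2$, consistent with the statement, and because the denominators increase this $\frac{1}{u_n}$ is indeed the smallest term.
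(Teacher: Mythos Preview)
Your argument is correct: the algebraic identity, the termination via the strictly decreasing positive-integer sequence $a_0 > a_1 > \cdots$, and the distinctness via $u_{i+1} < u_{i+2}$ are all sound, and your index remark about the last active step at $i=n-2$ matches the statement.

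The comparison to make is that the paper does not actually prove this theorem in-line; it simply states the result and defers to the reference \cite{gc} with the sentence ``Some proofs of the theorem above is given in \cite{gc}.'' So you have supplied a full self-contained proof where the paper gives none. What you wrote is the standard Fibonacci--Sylvester argument, which is almost certainly what the cited source contains, so there is no methodological divergence to discuss---only that your write-up is more complete than the paper's treatment of this particular theorem.
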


Some proofs of the theorem above is given in \cite{gc}.

\begin{theorem}

 The relation $S^{-1}: \mathbb{Q^{<}} \rightarrow X$ is well-defined.
 
\end{theorem}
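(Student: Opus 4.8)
The plan is to read ``well-defined'' here as the assertion that $S^{-1}$ is \emph{total} on $\mathbb{Q^{<}}$: for every $\frac{a}{b}\in\mathbb{Q^{<}}$ there is at least one $X\in\mathcal{P}(U_f)_{\mid X\mid \geq 2}$ with $S(X)=\frac{a}{b}$, and moreover the object $X$ returned is a genuine set of \emph{distinct} unit fractions. The natural engine for producing such an $X$ is the Greedy Algorithm already stated, so the bulk of the work is to confirm that the greedy expansion truly terminates and truly lands in $\mathcal{P}(U_f)_{\mid X\mid \geq 2}$.

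First I would split on whether $a\geq 2$ or $a=1$. For $a\geq 2$ I would run the recurrence $u_{i+1}=\ceil{b_i/a_i}$, $a_{i+1}=a_i u_{i+1}-b_i$, $b_{i+1}=b_i u_{i+1}$, and check four things in turn. (i) Each $u_{i+1}$ is an integer $\geq 2$: since $a_i<b_i$ we have $b_i/a_i>1$, so $\ceil{b_i/a_i}\geq 2$ and the term $\frac{1}{u_{i+1}}$ is a legitimate element of $U_f$. (ii) The new numerator is nonnegative and strictly smaller: from $u_{i+1}-1<b_i/a_i\leq u_{i+1}$ I obtain $0\leq a_i u_{i+1}-b_i<a_i$, that is $0\leq a_{i+1}<a_i$. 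This inequality is the heart of the termination argument.

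Second, (iii) termination: because $a_0>a_1>a_2>\cdots$ is a strictly decreasing sequence of nonnegative integers, well-ordering forces it to reach $0$ after finitely many steps, and the index at which $a_i\mid b_i$ (equivalently, at which the remainder is already a unit fraction) is exactly where this happens, so the while-loop halts. Then (iv) distinctness: I would show the $u_i$ are strictly increasing. The key estimate is that the greedy choice leaves a remainder smaller than the next gap: from $\frac{a_i}{b_i}<\frac{1}{u_{i+1}-1}$ one gets $\frac{a_{i+1}}{b_{i+1}}=\frac{a_i}{b_i}-\frac{1}{u_{i+1}}<\frac{1}{u_{i+1}(u_{i+1}-1)}$, whence $u_{i+2}=\ceil{b_{i+1}/a_{i+1}}>u_{i+1}$. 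Strictly increasing denominators make the $\frac{1}{u_i}$ pairwise distinct, so $X=\{1/u_1,\dots,1/u_n\}$ is a true set; and since $(a,b)=1$ with $a\geq 2$ forces $a\nmid b$, step (ii) yields $a_1>0$, so there are at least two terms and $X\in\mathcal{P}(U_f)_{\mid X\mid \geq 2}$.

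Finally, the case $a=1$ is the one place where greedy fails the cardinality requirement, since it returns the single fraction $\frac{1}{b}$. Here I would invoke the splitting identity $\frac{1}{b}=\frac{1}{b+1}+\frac{1}{b(b+1)}$, exhibiting two distinct unit fractions summing to $\frac{1}{b}$, so that $X=\{\,1/(b+1),\,1/(b(b+1))\,\}\in\mathcal{P}(U_f)_{\mid X\mid \geq 2}$. I expect the main obstacle to be steps (ii)--(iii), the termination argument, together with the bookkeeping needed to guarantee that the final term really is a unit fraction and that the $a=1$ boundary case is not silently dropped; the distinctness estimate in (iv) is the other delicate point, but it falls out cleanly once the remainder bound $\frac{a_{i+1}}{b_{i+1}}<\frac{1}{u_{i+1}(u_{i+1}-1)}$ is in hand.
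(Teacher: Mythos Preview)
Your proposal is correct and follows the same route as the paper: invoke the Greedy Algorithm (Theorem~2.2) to produce, for each $\frac{a}{b}\in\mathbb{Q}^{<}$, a finite set $X$ of distinct unit fractions with $S(X)=\frac{a}{b}$. The paper's own argument is a single sentence (``By greedy algorithm, existence of the finite set $X$ is immediate''), so your version is strictly more detailed rather than different in spirit.

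Two points are worth noting. First, the paper treats Theorem~2.2 as a black box with proofs outsourced to \cite{gc}, whereas you reprove termination (strictly decreasing numerators) and distinctness (the remainder bound $\frac{a_{i+1}}{b_{i+1}}<\frac{1}{u_{i+1}(u_{i+1}-1)}$ forcing $u_{i+2}>u_{i+1}$); this is extra work but entirely sound. Second, your handling of the boundary case $a=1$ via the splitting $\frac{1}{b}=\frac{1}{b+1}+\frac{1}{b(b+1)}$ actually patches a gap the paper leaves open: since $X$ must lie in $\mathcal{P}(U_f)_{|X|\geq 2}$, the greedy output $\{1/b\}$ is not admissible, and the paper's one-line proof does not address this. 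So your argument is not merely a different presentation but a genuine completion of the paper's sketch.
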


\begin{proof}
 Let $c= \frac{a}{b} \in \mathbb{Q^{<}} $ and $1/x_i \in X$ for $i=1,2,3, \cdots, t-1, t$. Clearly, $t$ is the cardinality of set $X$. By greedy algorithm, existence of the finite set $X$ is immediate wherein the $n$'s are the $u_i$'s.
\end{proof}
 
\begin{theorem}{\cite{botts}}

Let $\mathbb{X}$ be the set that contains all $X$'s in the relation $S^{-1}: 1 \rightarrow X$. $\mid\mathbb{X}\mid =\infty$.
\end{theorem}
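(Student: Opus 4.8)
The plan is to exhibit, for every integer $k \geq 3$, at least one element $X \in \mathbb{X}$ with $|X| = k$; since finite sets of distinct cardinalities are necessarily distinct, this produces infinitely many distinct members of $\mathbb{X}$, forcing $|\mathbb{X}| = \infty$. The engine of the construction is the \emph{splitting identity}
$$\frac{1}{n} = \frac{1}{n+1} + \frac{1}{n(n+1)},$$
valid for every integer $n \geq 2$, which rewrites one unit fraction as a sum of two unit fractions whose denominators are both strictly larger than $n$.

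First I would take the base representation $X_3 = \{ 1/2, 1/3, 1/6 \}$; a direct check gives $S(X_3) = 1$, and since $|X_3| = 3 \geq 2$ we have $X_3 \in \mathbb{X}$. For the inductive step, suppose $X_k = \{ 1/d_1, \ldots, 1/d_k \} \in \mathbb{X}$ with $d_1 < \cdots < d_k$ and $S(X_k) = 1$. I would apply the splitting identity to the term with the \emph{largest} denominator $d_k$, replacing $1/d_k$ by $1/(d_k+1) + 1/(d_k(d_k+1))$ to obtain a new collection $X_{k+1}$. The total sum is preserved, so $S(X_{k+1}) = 1$, and it remains only to confirm that $X_{k+1}$ genuinely has $k+1$ distinct terms.

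The distinctness check is where the choice to split the largest denominator pays off: since $d_k \geq 2$, the two new denominators satisfy $d_k + 1 > d_k$ and $d_k(d_k+1) > d_k + 1$, so each strictly exceeds every old denominator $d_1, \ldots, d_k$ and the two differ from each other; hence $X_{k+1}$ is a set of $k+1$ distinct unit fractions, so $X_{k+1} \in \mathbb{X}$ with $|X_{k+1}| = k+1$. Induction then yields a member of $\mathbb{X}$ of every cardinality $k \geq 3$, completing the argument. The only delicate point — and the one I would be most careful about — is exactly this distinctness bookkeeping: a naive split of an arbitrary term could reintroduce a denominator already present, collapsing two fractions into one and breaking the cardinality count, so the monotonicity guaranteed by always splitting the largest denominator is essential.
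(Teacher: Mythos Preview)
Your proof is correct. The paper itself does not prove this theorem; it simply attributes the result to Botts (1967) and alludes to a ``chain reaction process,'' which is precisely the iterated application of the splitting identity $\frac{1}{n} = \frac{1}{n+1} + \frac{1}{n(n+1)}$ that you use --- the same operator the paper later introduces explicitly in Section~3 as $(n) = (n+1, n(n+1))$. So your argument is essentially the one the paper is invoking by citation.
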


Botts(1967) had proven this theorem and explained in detail the structure of the denominators at each stage of the chain reaction.

\begin{theorem}
The relation $S^{-1}: \mathbb{Q^{\geq}} \rightarrow X$ is well-defined.
\end{theorem}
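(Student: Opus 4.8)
The plan is to reduce the case $\mathbb{Q^{\geq}}$ to the two cases already in hand, exactly along the lines sketched after the Division Algorithm. Given $\frac{a}{b}\in\mathbb{Q^{\geq}}$, I would first apply the Division Algorithm to write $a=bq+r$ with $0\le r<b$, so that $\frac{a}{b}=q+\frac{r}{b}$ with $q\in\mathbb{Z^+}$ and $\frac{r}{b}\in\mathbb{Q^{<}}$ (or $r=0$). If $r=0$ the number is the integer $q$ and only the integer construction below is needed; otherwise I will produce a finite $X_2\subset U_f$ with $S(X_2)=\frac{r}{b}$ and a finite $X_1\subset U_f$ with $S(X_1)=q$, chosen so that $X_1\cap X_2=\emptyset$. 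Then $X=X_1\cup X_2$ has $|X|\ge 2$, its terms are pairwise distinct, and $S(X)=S(X_1)+S(X_2)=q+\frac{r}{b}=\frac{a}{b}$, which exhibits $\frac{a}{b}$ in the domain of $S^{-1}$.

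The fraction $\frac{r}{b}$ is handled by the already-proved theorem that $S^{-1}$ is well-defined on $\mathbb{Q^{<}}$, which yields a finite expansion $X_2$; let $M_0$ be the largest denominator occurring in $X_2$. For the integer part I would build $X_1$ from $q$ pairwise-disjoint copies of $1$. The key lemma I would isolate is: for every bound $M$ there is a finite $Y\subset U_f$ with $S(Y)=1$ whose denominators all exceed $M$. Granting this, I construct $X_1$ in blocks: take a representation $Y_1$ of $1$ with all denominators $>M_0$ and let $M_1$ be its largest denominator; then a representation $Y_2$ of $1$ with all denominators $>M_1$; and so on up to $Y_q$ with all denominators exceeding $M_{q-1}$. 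By construction the blocks $Y_1,\dots,Y_q$ are pairwise disjoint and each is disjoint from $X_2$, so $X_1=Y_1\cup\cdots\cup Y_q$ consists of distinct unit fractions with $S(X_1)=q$ and $X_1\cap X_2=\emptyset$, which is precisely the disjointness condition flagged in the introduction.

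The substantive step, and the one I expect to be the main obstacle, is this key lemma: that $1$ can be expanded into distinct unit fractions all of whose denominators exceed a prescribed $M$. Botts' theorem above supplies infinitely many expansions of $1$ but does not by itself force the denominators above $M$; two routes close this gap. The first uses the splitting identity $\frac{1}{n}=\frac{1}{n+1}+\frac{1}{n(n+1)}$, which replaces a term by two strictly larger ones, so repeatedly splitting the smallest surviving term drives every denominator above $M$ while preserving the sum $1$; here one must verify that the splits never reintroduce a denominator $\le M$ nor a repeated term. The second route uses divergence of the harmonic tail $\sum_{n>M}\frac{1}{n}$: take the longest block $\frac{1}{M+1}+\cdots+\frac{1}{K}$ whose sum does not exceed $1$, note that the remainder then lies in $\mathbb{Q^{<}}$ and is smaller than $\frac{1}{K+1}$, and expand it by the $\mathbb{Q^{<}}$ theorem via the greedy algorithm, whose denominators are consequently all larger than $K>M$ and hence disjoint from the block. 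Either route yields the lemma, and with it the block construction above shows that $S^{-1}$ is defined for every element of $\mathbb{Q^{\geq}}$, so the relation is well-defined.
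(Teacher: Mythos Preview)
Your proposal is correct and follows essentially the same strategy as the paper: split $\frac{a}{b}=q+\frac{r}{b}$ via the Division Algorithm, expand $\frac{r}{b}$ by the $\mathbb{Q^{<}}$ result, and then realize each of the $q$ copies of $1$ by an expansion whose denominators all exceed everything used so far, so that the resulting sets are pairwise disjoint. The paper obtains the ``start above any bound $M$'' step by appealing directly to Botts' chain-reaction theorem (Theorem~2.4), whereas you isolate it as an explicit lemma and supply two self-contained arguments (iterated splitting, or harmonic tail plus greedy); this makes your write-up more rigorous but the underlying route is the same.
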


\begin{proof}
By theorem 2.1, Each $\frac{a}{b} \in \mathbb{Q^{\geq}}$ can be written as $$\frac{a}{b} = q +\frac{r}{b}$$ for integers $q$ and $r$. Thus by theorem 2.3 and 2.4, we have

$$\frac{a}{b} = \sum_{}^{}\frac{1}{l} + \sum_{}^{}\frac{1}{m}$$ for some collection of $l$ and $m$ in $\mathbb{Z^+} -\{1\}$. To guarantee that the expansion have all unique $l$'s and $m$'s, we start with the $m$'s. We know that by theorem 2.3, we have all unique $m$'s. All we need to do now is to guarantee that all $l$'s are not equal to any $m$, and each $l$ is unique in the sum. To do this, we let $\frac{1}{m_k}$ be the smallest term in the expansion of $\frac{r}{b}$. By theorem 2.4, we can start to expand $1$ at any starting point, $l_i\in \mathbb{Z^+} -\{1\}$ and we let this $l_i$ be equal to $ m_k+1$. And thus generate the expansion for 1. And since $q$ can be written as sum of 1's, we can redo the process by simply making the smallest term in the first expansion of 1, say $\frac{1}{l_k}$, and make a new expansion for the next 1 of the expansion of $q$ by starting at $\frac{1}{l_k+1}$. 
\end{proof}

 In conclusion, the answer to our question is: Yes, all elements of $\mathbb{Q^+}$  are defined for the relation $S^{-1}$.

\section{Operations for Egyptian Fractions}

The function $S$ is a many-to-one function that is why $S^{-1}$ is not a function. In this section, we focus on X's such that for $X_1$ and $X_2$ in 
$\mathcal{P}(U_f)_{\mid X\mid \geq 2}$, $S(X_1)= S(X_2)$. In this manner, we operate on the elements of $X$ to produce another $X'$ such that $S(X)= S(X')$ and use the notation $\mathbb{O}$ as the operator on X, $\mathbb{O}: X \rightarrow X'$.

\begin{definition}
Let $X$ be the original Egyptian Fraction and $X'$ be the new Egyptian Fraction from $X$ such that $S(X)= S(X')$. 
\begin{itemize}
    \item If $\mid X \mid  <  \mid X' \mid$, then the operator $\mathbb{O}$ is said to be a splitter, 
    \item If $\mid X \mid  =  \mid X' \mid$, then the operator $\mathbb{O}$ is said to be a rewriter, and 
    \item If $\mid X \mid  >  \mid X' \mid$, then the operator $\mathbb{O}$ is said to be a merger.
\end{itemize}

\end{definition}

We start with \textit{splitter} operator $\mathbb{O}$, $$\frac{1}{n}= \frac{1}{n+1} + \frac{1}{n(n+1)}$$
for $X$ such that $S(X)=1$.For simplicity of notation we can write the splitter operator ( or equation) above as $$(n) = (n+1, n(n+1)).$$

For example, 

$$1= \frac{1}{2}+\frac{1}{3}+\frac{1}{6}$$

$$1= \frac{1}{2}+\frac{1}{4}+\frac{1}{6}+\frac{1}{12}$$

 The first $N =\{2,3,6\}$ and $N'=\{2,4,6,12\}$. It can be seen that $N \cap N' = \{2,6\}$ which are the elements that did not change when operated. Evidently, 3 becomes 4 and 12 which came about using the splitter operator above with $n=3$.
 
 In fact, the aforementioned equation is only a special case of $$(ab) = (a(a+b) ,b(a+b))$$ when $a=1$ and $b=n$. And for odd denominators, we have $$1= \frac{1}{3}+\frac{1}{5}+\frac{1}{7}+\frac{1}{9}+\frac{1}{11}+\frac{1}{15}+\frac{1}{35}+\frac{1}{45}+\frac{1}{231} \cite{intef}.$$
 In Knott \cite{Knott}, Ben Thurston (May 2017) emailed Knott two other simple formulas::
 
 $$(ab) = (a(a+b) ,b(a+b))$$
 $$(abc) = (a(ab+bc+ac) ,b(ab+bc+ac), c(ab+bc+ac))$$
 
 Generally, it is easy to see that 
 
 $$(\prod_{i=1}^mx_i)= (x_1\cdot z,x_2 \cdot z, \cdots , x_{m-1}\cdot z, x_m \cdot z)$$ where $$z =\sum_{j=1}^m\frac{1}{x_j}(\prod_{i=1}^mx_i)$$

To illustrate,  we let $x_i=i+1$ for $i=1,2,3,4,5$. Thus $$(720) = (7200,4800,3600,2880,2400)$$
Simplifying the above equation, $$\frac{1}{3}= \frac{1}{10}+\frac{1}{12}+\frac{1}{15}+\frac{1}{20}+\frac{1}{30}.$$

The methods illustrated above are splitter operators from one term to at least two terms. The latest example $(3)= (10,12,15,20,30)$ is from one term to five terms. The next section includes  the parity condition on splitting the fraction into $2$ or $3$ parts (least number of parts needed for splitting).

\section{Splitter Operator with Parity Condition}

In the work of Knott \cite{Knott}, the case for splitting even to two even Egyptian fractions is given below which he called the Expand Even Rule,

$$(2n) = (2(n+1),2n(n+1))$$ for integers $n \geq 2$.

This came from the splitting equation $$(n) = (n+1, n(n+1))$$ multiplied both sides by $\frac{1}{2}$. The reason for such action is to expand by preserving evenness since for $$(n) = (n+1, n(n+1)).$$ Denote $e$ for even and $o$ for odd, we have all the possible cases below:

\begin{center}
    \begin{tabular}{|c|c|c|}
    \hline
        $n$ & $n+1$ & $n(n+1)$ \\ \hline
        e & o & e \\ \hline
        o & e & e \\ \hline
    \end{tabular}
\end{center}

Clearly, the splitting equation $(n) = (n+1, n(n+1))$ is not a \textit{parity-preserving splitting equation}.

\begin{definition}
An operator  $$(a_1, a_2,..., a_{n-1}, a_n) = (b_1, b_2,...,b_{m-1}, b_{m})$$ is said to be a parity-preserving operator ( or equation) if all of the entries are of the same parity.
\end{definition}

To illustrate the above definition, we take the splitter operator $(n) = (n+1, n(n+1))$ where  $a_1 = n$, $b_1 = n+1$, and $b_2= n(n+1)$.
 
 The equation $(2n) = (2(n+1),2n(n+1))$ is clearly a parity-preserving equation. In this equation,  all terms are even. For odd, we have the parity-preserving splitting equations below which are given by Peter in \cite{SplitEq}.
 
 If $k$ is odd,
 $$\frac{1}{2k+1}= \frac{1}{3k+2} + \frac{1}{6k+3}+\frac{1}{18k^2+21k+6}$$
 
 If $k$ is even, 
 $$\frac{1}{2k+1}= \frac{1}{3k+3} + \frac{1}{6k+3}+\frac{1}{6k^2+9k+3}$$
 
 We investigate the equations above by redefining and stating them formally:
 
 \begin{theorem}
 Let $n=2k+1$, $a=3$, $b= 3k+2$, and $c=k+1$ for positive integers $k$. If $k$ is odd, then $(n)= (b,an,abn)$.Otherwise for even $k$, $(n)= (b+1,an,a\cdot(b+1)\cdot(n))$. 
 \end{theorem}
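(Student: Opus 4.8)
The plan is to treat each of the two stated equations as an assertion of the form $\frac{1}{n}=\frac{1}{D_1}+\frac{1}{D_2}+\frac{1}{D_3}$ and to verify it directly by clearing denominators. Since $a=3$, $b=3k+2$, $c=k+1$, and $n=2k+1$ are all fixed polynomials in $k$, each equation becomes a polynomial identity in the single variable $k$; placing the right-hand side over a common denominator and checking that the numerator collapses to match $\frac{1}{n}$ settles the equality for every integer $k$, independently of parity. This cleanly separates the algebraic content of the theorem from its parity content.

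Rather than expand blindly, I would expose the structure through two relations that hold identically: $b=n+c$ and $b+1=ac$ (indeed $n+c=(2k+1)+(k+1)=3k+2=b$ and $ac=3(k+1)=3k+3=b+1$). For the odd-$k$ equation $(n)=(b,an,abn)$, I subtract the leading term, $\frac{1}{n}-\frac{1}{b}=\frac{b-n}{bn}=\frac{c}{bn}$, and then observe $\frac{1}{an}+\frac{1}{abn}=\frac{b+1}{abn}=\frac{ac}{abn}=\frac{c}{bn}$, where the middle step uses $b+1=ac$. For the even-$k$ equation I subtract $\frac{1}{b+1}$, obtaining $\frac{(b+1)-n}{(b+1)n}=\frac{c+1}{(b+1)n}$, and check that $\frac{1}{an}+\frac{1}{(b+1)n}=\frac{(b+1)+a}{a(b+1)n}=\frac{ac+a}{a(b+1)n}=\frac{c+1}{(b+1)n}$, again by $b+1=ac$. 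Thus both equalities reduce to the single relation $ac=b+1$.

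The parity claim is then immediate. The middle denominator is $an=3n$, which is odd because $n=2k+1$ is odd. When $k$ is odd, $b=3k+2$ is odd, so both $b$ and $abn$ (a product of the odd numbers $a$, $b$, $n$) are odd; when $k$ is even, $b+1=3k+3$ is odd, so both $b+1$ and $(b+1)n$ are odd. In each case all three denominators are odd, so the splitting is parity-preserving in the sense of the preceding definition. Note that the parity hypothesis on $k$ is used only here, to force oddness; the summation identities themselves hold for all $k$.

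The main thing to watch is bookkeeping rather than any genuine difficulty, and here I flag one point. Matching the symbolic denominators against the displayed even-$k$ expansion, the third denominator must be $(b+1)\cdot n=a\cdot c\cdot n=3(k+1)(2k+1)=6k^2+9k+3$, in agreement with the earlier display. The expression $a\cdot(b+1)\cdot n$ appearing in the statement instead evaluates to $9(k+1)(2k+1)=18k^2+27k+9$, and with that denominator the common-denominator numerator comes out as $9k+7$ rather than the required $9k+9$, so the equality fails; the extra factor $a$ is spurious. I would therefore prove the corrected even-$k$ identity $(n)=(b+1,\,an,\,(b+1)n)$, whose verification is exactly the computation carried out above.
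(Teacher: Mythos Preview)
Your argument is correct, and it differs in spirit from the paper's own proof. The paper \emph{derives} the value of $b$: it starts from the trivial decomposition $(n)=(3n,3n,3n)$, fixes the middle term $an=3n$, posits the shape $(n)=(b,3n,3bn)$, and solves $\frac{1}{b}+\frac{1}{3bn}=\frac{2}{3n}$ to obtain $3n+1=2b$, i.e.\ $b=3k+2$. It then observes that this $b$ is odd exactly when $k$ is odd, and for even $k$ simply replaces $b$ by $b+1$ without re-verifying the resulting identity. Your approach instead \emph{verifies} the two identities directly by isolating the single structural relation $b+1=ac$ (together with $b=n+c$), which makes both computations one line each. The paper's route explains where $b$ comes from; yours makes the algebra transparent and uniform.

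Your detection of the typo in the even-$k$ case is correct and worth noting. The displayed formula just above the theorem has third denominator $6k^2+9k+3=3(k+1)(2k+1)=(b+1)n=acn$, whereas the expression $a\cdot(b+1)\cdot n$ in the theorem statement (and in the accompanying parity table) evaluates to $9(k+1)(2k+1)$, and with that value the identity is false, exactly as your numerator count $9k+7\neq 9k+9$ shows. The paper's proof does not catch this because it never actually checks the even-$k$ equation; it only argues that $b+1$ is odd. Your corrected identity $(n)=(b+1,\,an,\,(b+1)n)$ is the one that matches the source formula and that you have verified.
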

 
 \begin{proof}
 We split $\frac{1}{n}$ to three equal parts: $(n)= (3n,3n,3n)$, so $a=3$.The middle part is $3n$, which is odd since $n=2k+1$ for positive integers $k$.Now, let $(n)= (b,3n,b \cdot 3n)$. Finding $b$, we have $$\frac{1}{b}+\frac{1}{b\cdot 3n}= \frac{2}{3n}$$ Solving the equation, $3n+1=2b$, then $$b = \frac{3(2k+1)+1}{2}= 3k+2$$ Therefore $b$ is odd only if $k$ is odd. So, if $k$ is even, then $b+1$ must be odd.
 \end{proof}
 
 A table below gives a summary of the previous theorem:
 \begin{center}
    \begin{tabular}{|c|c|c|c|c|c|c|}
    \hline
       $n$ & $k$ & $b$ & $b+1$ & $an$ & $abn$ & $a\cdot(b+1)\cdot n$\\ \hline
        o & o & o & e  & o & o & e\\ \hline
        o & e & e & o  & o & e & o\\ \hline
    \end{tabular}
\end{center}
 
 The equation above splits an odd $n$ to three odd terms because it is impossible to split odd $n$ into two parts. Also, the table illustrates that for odd $n$ where the $k$ in $n=2k+1$, If $k$ is odd, then  all entries in $(b,an,abn)$ are odd. And if $k$ is even, then all entries in $(b+1,an,a\cdot(b+1)\cdot(n))$ are odd.
 
\begin{theorem}
 There exists no odd positive integers $a$ and $b$ greater than 1 for odd number $n$ such that $(n)= (a,b)$. 
\end{theorem}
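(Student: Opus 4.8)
The plan is to argue by contradiction using a parity count on the cleared-denominator form of the equation. First I would unpack the notation: by the convention of the paper, $(n) = (a,b)$ means
$$\frac{1}{n} = \frac{1}{a} + \frac{1}{b}.$$
Since $n$, $a$, and $b$ are all positive integers (hence nonzero), I can multiply through by $nab$ to obtain the equivalent Diophantine equation $ab = n(a+b)$. This reduction is the only genuine bookkeeping step, and it is reversible, so the existence of odd $a,b>1$ solving the original equation is the same as the existence of odd $a,b>1$ solving $ab = n(a+b)$.

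Next I would examine the parity of each side of $ab = n(a+b)$. Because $a$ and $b$ are both odd, their product $ab$ is odd. On the other hand, $a+b$ is a sum of two odd numbers and is therefore even, so the right-hand side $n(a+b)$ is even for \emph{every} integer $n$. The equation thus asserts that an odd integer equals an even integer, which is impossible. This contradiction establishes that no such pair of odd integers $a,b>1$ can exist for any odd $n$, proving the statement.

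The main obstacle here is essentially nonexistent: once the identity is rewritten as $ab = n(a+b)$, the parity mismatch is forced immediately. It is worth observing that the argument uses neither the hypothesis that $n$ is odd nor that $a,b>1$ strictly; the obstruction comes solely from $a+b$ being even whenever $a$ and $b$ are odd. I would remark that this is precisely why Theorem 4.3 must use three terms rather than two: since two odd unit fractions can never sum to a single unit fraction, the least parity-preserving split of an odd $\tfrac{1}{n}$ necessarily requires at least three parts.
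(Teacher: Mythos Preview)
Your proof is correct and follows essentially the same approach as the paper: clear denominators to get $ab = n(a+b)$ (the paper phrases this as the divisibility $(a+b)\mid ab$) and observe the parity contradiction that $ab$ is odd while $a+b$ is even. Your observation that the hypothesis ``$n$ odd'' is never used is accurate and worth noting.
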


\begin{proof}
 Suppose there exists odd $a$ and $b$ for odd $n$ such that $(n)= (a,b)$.  Thus, $(n)= (2k_1+1,2k_2+1)$ for positive integers $k_1, k_2$. Since $n$ is an integer, then $2(k_1+k_2+1) \divides (2k_1+1)(2k_2+1)$. But this is false since $2\notdivides ab$.
\end{proof}
 
 \section{ A Rewriter for Egyptian Fractions}
 
 So far, what we have is to split an unit fraction to at least two parts. In this section, we introduce an equation that can rewrite two unit fractions into two another unit fractions which can be seen as an operator.
 
 \begin{theorem}
 Let $d$ and $q$ be positive integers where $q > 1$. If $r = q+d$, and $s = qr-d$, then $$(s)(rs)=(qr)(qs)$$
\end{theorem}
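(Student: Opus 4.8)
The plan is to read the compressed bracket notation $(s)(rs)=(qr)(qs)$ as the statement that the two unit fractions on each side have equal sum, i.e.
$$\frac{1}{s}+\frac{1}{rs}=\frac{1}{qr}+\frac{1}{qs},$$
and then to verify this identity purely algebraically by clearing denominators and substituting the hypotheses $r=q+d$ and $s=qr-d$. Since this is an equation between two Egyptian fractions each of cardinality $2$, confirming it certifies that the associated operator $\mathbb{O}$ is a \emph{rewriter} in the sense of the earlier definition.

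First I would put each side over a common denominator. The left-hand side becomes $\frac{r+1}{rs}$ and the right-hand side becomes $\frac{r+s}{qrs}$. Multiplying the target equation through by $qrs$ (legitimate, since $q,r,s$ are positive), the whole claim reduces to the single linear relation
$$q(r+1)=r+s,$$
equivalently $s=qr+q-r$. So the theorem collapses to verifying one scalar identity.

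Next I would substitute the definitions. From $r=q+d$ we get $d=r-q$, hence
$$s=qr-d=qr-(r-q)=qr+q-r,$$
which is exactly the relation $s=qr+q-r$ demanded above. Thus the reduced identity holds and the two sums are equal. I would then record that the four quantities $s,rs,qr,qs$ are genuine unit-fraction denominators: since $q\geq 2$ and $d\geq 1$ force $r\geq 3$ and $s=q(r+1)-r\geq r+2\geq 5$, every denominator is an integer exceeding $1$, so the operator sends a valid pair in $U_f$ to another valid pair.

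There is no genuinely hard step here. The only real obstacle is bookkeeping: pinning down the correct interpretation of the juxtaposition notation (sum of two unit fractions, not a product), and carrying the substitution $d=r-q$ through consistently. Once the problem is reduced to the relation $s=qr+q-r$, the verification is immediate.
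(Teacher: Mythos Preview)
Your proof is correct. You interpret the notation properly, clear denominators to reduce the claim to the single scalar identity $q(r+1)=r+s$, and then verify that identity from $s=qr-d$ and $d=r-q$.

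The paper's own argument is also a direct algebraic verification, but organized differently: instead of reducing both sides to one equation, it starts from $\frac{1}{qr}+\frac{1}{qs}$, factors out $\frac{1}{r}$, and uses the relations $r=q+d$ and $s=qr-d$ in sequence to transform the expression step by step into $\frac{1}{s}+\frac{1}{rs}$. So the paper gives a forward chain of manipulations, while you collapse everything to one polynomial relation and check it; both are equally elementary and neither requires an idea the other lacks. Your extra remark that all four denominators exceed $1$ is a nice sanity check the paper defers to its subsequent inequality theorem.
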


\begin{proof}
 
 \begin{equation} 
\begin{split}
\frac{1}{qr}+\frac{1}{qs} & = \frac{1}{r}\left(\frac{1}{q}+\frac{r}{qs}\right) \\
 & = \frac{1}{r}\left(\frac{1}{q}+\frac{d}{qs}+\frac{1}{s}\right)\\
 & = \frac{1}{r}\left(\frac{qr-d+d}{qs}+\frac{1}{s}\right)\\
 & = \frac{1}{r}\left(\frac{qr}{qs}+\frac{1}{s}\right)\\
 & = \frac{1}{r}\left(\frac{r}{s}+\frac{1}{s}\right)\\
 & = \frac{1}{s}+\frac{1}{rs}\\
\end{split}
\end{equation}
 \end{proof}

The terms in the equation above are $s$, $rs$ ,$qr$, $qs$. With these, we explore the related inequality for the terms.

\begin{theorem}
Let $d$ and $q$ be positive integers where $q > 1$, $r = q+d$, and $s = qr-d$, then $$q < r < s < qr < qs < rs.$$
\end{theorem}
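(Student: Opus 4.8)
The plan is to verify the five strict inequalities in the chain $q < r < s < qr < qs < rs$ individually. Two of them fall out immediately from the defining relations, one requires a short sign computation, and the remaining two are obtained by scaling inequalities already in hand by the positive quantities $q$ and $s$. Throughout I would use only $q \geq 2$ (from $q > 1$ with $q$ an integer) and $d \geq 1$.

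First I would record that the two ``free'' inequalities come directly from the definitions: since $r = q + d$ we have $r - q = d > 0$, giving $q < r$; and since $s = qr - d$ we have $qr - s = d > 0$, giving $s < qr$. These require nothing beyond positivity of $d$.

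Next I would treat the middle inequality $r < s$, which is the only substantive step. Substituting $r = q + d$ into $s = qr - d$ gives $s = q^2 + d(q-1)$, so that $s - r = q^2 + d(q-1) - (q + d) = q(q-1) + d(q-2)$. Since $q \geq 2$, the term $q(q-1) \geq 2$ is strictly positive while $d(q-2) \geq 0$ is nonnegative, hence $s - r > 0$ and $r < s$. As a byproduct this same expression confirms $s > 0$, which I will need below.

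Finally, the two outstanding inequalities follow by multiplying established ones by positive factors: multiplying $r < s$ by $q > 0$ yields $qr < qs$, and multiplying $q < r$ by $s > 0$ yields $qs < rs$. Concatenating all five gives the claimed chain. The main, and essentially only, obstacle is the sign analysis of $s - r$; the decomposition $q(q-1) + d(q-2)$ is what makes the hypothesis $q > 1$ do exactly the required work, with the borderline case $q = 2$ safely handled because the $d$-dependent term vanishes there but $q(q-1) = 2 > 0$ remains.
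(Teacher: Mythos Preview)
Your proof is correct and is organized the same way as the paper's---both dispatch $q<r$ immediately, isolate $r<s$ as the only real work, and then declare the remaining inequalities routine---but your treatment of $r<s$ is genuinely different and considerably cleaner. The paper shows $q^2+(d-1)q-2d>0$ by locating the positive root of this quadratic via the quadratic formula, bounding $\frac{1-d+\sqrt{d^2+6d+1}}{2}<2\leq q$ through a chain of manipulations that begins from $1<9$, and then unwinding back to $(q-1)(q+d)-d>0$. You instead rewrite the same quantity as $s-r=q(q-1)+d(q-2)$ and read off positivity directly from $q\geq 2$. Your decomposition makes the role of the hypothesis $q>1$ transparent (the first summand is strictly positive, the second nonnegative, with the boundary case $q=2$ handled automatically), whereas the paper's route obscures this behind a discriminant computation. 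You also spell out $s<qr$, $qr<qs$, and $qs<rs$ explicitly, which the paper leaves as ``straightforward.''
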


\begin{proof}
First, $q< r$ is true, then we establish $r < s$. We start with the fact that $1 < 9$, then $(d^2+6d +1) < (d^2+6d+9)$.
And then, $$ (d^2+6d +1) < (d+3)^2$$
 $$ \sqrt{(d^2+6d +1)} < d+3$$
  $$ 1-d + \sqrt{(d^2+6d +1)} < 4$$
$$ \frac{1-d + \sqrt{(d^2+6d +1)}}{2} < 2$$

Note that $q$ is at least 2. Thus, $\frac{1-d + \sqrt{(d^2+6d +1)}}{2} < 2 \leq q$ And then,
$$ \frac{1-d + \sqrt{(d^2+6d +1)}}{2} < q$$
 $$ \frac{-(d-1) + \sqrt{(d-1)^2 - 4(-2d)}}{2} < q$$
 $$ 0 < q^2 + (d-1)q -2d$$
$$ 0 < (q-1)(q+d)-d$$
  $$0 < qr-r+d$$
 $$r < qr-d$$
$$r < s$$

With this,all other related inequalities are straightforward.
\end{proof}

To end this section, we show the parity table of the equation above.

\begin{center}
    \begin{tabular}{|c|c|c|c|c|c|c|}
    \hline
       $d$ & $q$ & $r$ & $s$ & $qr$ & $qs$ & $rs$\\ \hline
        o & o & e & o  & e & o & e\\ \hline
        o & e & o & o  & e & e & o\\ \hline
        e & o & o & o  & o & o & o\\ \hline
        e & e & e & e  & e & e & e\\ \hline
    \end{tabular}
\end{center}

The table generated in this paper follows the Boolean algebra such that $e=0$ and $o=1$. In addition, we have a special theorem below about odd parity.

\begin{theorem}
Let $r = q+d$, and $s = qr-d$.  The splitting equation $(s)(rs)=(qr)(qs)$ is an odd parity preserving equation if and only if the integer $q > 1$ is odd, and the value of $d$ is a positive even number.
\end{theorem}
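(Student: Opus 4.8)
The plan is to strip the equation $(s)(rs)=(qr)(qs)$ down to a purely arithmetic question about the parities of $q$, $r$, and $s$, and then read off the answer from the defining relations $r=q+d$ and $s=qr-d$. By the definition of a parity-preserving operator, the equation is an \emph{odd} parity preserving equation exactly when every entry appearing in it, namely $s$, $rs$, $qr$, and $qs$, is odd. So the goal is to characterize, in terms of $q$ and $d$, precisely when all four of these are simultaneously odd.

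The first step I would carry out is the reduction to the three ``atoms'' $q$, $r$, $s$. Using the elementary fact that a product of integers is odd if and only if each factor is odd, the entry $qr$ is odd iff $q$ and $r$ are odd, the entry $qs$ is odd iff $q$ and $s$ are odd, and $rs$ is odd iff $r$ and $s$ are odd, while the remaining entry $s$ is odd iff $s$ is odd. Collecting these, all four entries are odd if and only if $q$, $r$, and $s$ are all odd. This converts the statement about the four (partly composite) terms into the cleaner condition ``$q,r,s$ all odd,'' and is the main conceptual move of the argument.

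The remaining step is to translate ``$q,r,s$ all odd'' into ``$q$ odd and $d$ even'' using $r=q+d$ and $s=qr-d$. For the forward implication I would assume $q$ odd and $d$ even; then $r=q+d$ is odd, and $s=qr-d$ is an odd product minus an even number, hence odd, so the three atoms are odd and the equation is odd parity preserving. For the converse I would assume $q$, $r$, $s$ are all odd; then $q$ is odd by hypothesis, and from $d=r-q$ with both $r$ and $q$ odd we conclude $d$ is even. This closes the equivalence.

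I expect no genuine obstacle here, since everything reduces to parity bookkeeping. The only points requiring care are confirming that the reduction in the first step is a true equivalence (in particular that each atom being odd forces every composite entry to be odd, so the converse direction is not lost), and using the correct defining relation when recovering the parity of $d$ in the converse: the parity of $d$ is read off from $d=r-q$, whereas $s=qr-d$ is only needed to settle the parity of $s$ in the forward direction. With those two relations kept straight, the characterization follows immediately.
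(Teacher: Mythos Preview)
Your argument is correct and is essentially the same as the paper's: both reduce the question to parity bookkeeping, using that a product is odd iff each factor is odd to get $q,r$ odd from $qr$ odd, then reading the parity of $d$ from $r=q+d$ (and, in the other direction, that of $s$ from $s=qr-d$). The only cosmetic difference is that you first isolate the clean intermediate equivalence ``all four entries odd $\Leftrightarrow$ $q,r,s$ all odd'' before invoking the defining relations, whereas the paper folds this reduction into the running argument; the content is identical.
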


\begin{proof}
We begin the proof by assuming the equation to be an odd parity preserving equation. As such, we define the function $p(t) = e$ if the expression $t$ is even, and $p(t) = o$ if $t$ is odd. Thus, $$p(s)=p(rs)=p(qr)=p(qs) = o.$$ Since $p(qr)$= o then, $p(q) = p(r)=o$. And since $p(r) = p(q+d) = o$, then $p(q)$ and $p(d)$ must have a different parity. But since, $p(q)$ is odd, then $p(d)$ must be even. As for the other direction, 
if $p(q)=o$ and $p(d)=e$, then $p(r) = o$, $p(s) = p(qr-d) =p(q)\cdot p(r)-p(d) =o\cdot o - e =o$. And since we know that $ o\cdot o = o$, then $p(rs)=p(qr)=p(qs) = o$.
\end{proof}

Ultimately, we generate five examples of the previous theorems

\begin{center}
    \begin{tabular}{|c|c|c|c|c|c|c|}
    \hline
       $d$ & $q$ & $r$ & $s$ & $qr$ & $qs$ & $rs$\\ \hline
        1 & 2 & 3 & 5  & 6  & 10 & 15\\ \hline
        2 & 3 & 5 & 13  & 15 & 39 & 65\\ \hline
        2 & 5 & 7 & 33  & 35 & 165 & 231\\ \hline
        4 & 3 & 7 & 17  & 21 & 51 & 119\\ \hline
        4 & 5 & 9 & 41  & 45 & 205 & 369\\ \hline
    \end{tabular}
\end{center}
 Rewriting some examples in the table in a conventional form, we have the following identities:
 
 $$\frac{1}{6}+\frac{1}{10}= \frac{1}{5}+\frac{1}{15}$$
 $$\frac{1}{15}+\frac{1}{39}= \frac{1}{13}+\frac{1}{65}$$
 $$\frac{1}{21}+\frac{1}{51}= \frac{1}{17}+\frac{1}{119}$$
 
\section{Acknowledgement}

The author would like to thank Peter and Ross Millikan for answering my questions and sharing useful materials at math.stackexchange.com. The author would like to thank Jose Arnaldo Dris for valuable conversation and unwavering support in preparing this manuscript.

\begin {thebibliography}{1}

\bibitem{burton}
Burton, David M. (2010). Elementary Number Theory. McGraw-Hill. pp. 17–19. ISBN 978-0-07-338314-9.

\bibitem{botts}
Botts, Truman. (1967). A Chain Reaction Process in Number Theory, Mathematics Magazine, Vol. 40, No. 2, pages 55-65

\bibitem{gc}
G. Carlo, “discrete mathematics - Fractions in Ancient Egypt,” Mathematics Stack Exchange, 02-Aug-2013. [Online]. Available: https://math.stackexchange.com/questions/458238/fractions-in-ancient-egypt. [Accessed: 25-Mar-2020].

\bibitem{SplitEq}
K. A. Dagal, “elementary number theory - On A Splitting Equation of an Egyptian fraction to Egyptian fractions such that all produced fractions have odd denominators.,” Mathematics Stack Exchange, 18-Mar-2020. [Online]. Available: https://math.stackexchange.com/questions/3585135/on-a-splitting-equation-of-an-egyptian-fraction-to-egyptian-fractions-such-that. [Accessed: 23-Mar-2020].

\bibitem{intef}
K. A. Dagal, “number theory - Egyptian fraction representation of $1$ where all denominators of the fractions are odd.,” Mathematics Stack Exchange, 17-Mar-2020. [Online]. Available: https://math.stackexchange.com/questions/3584240/egyptian-fraction-representation-of-1-where-all-denominators-of-the-fractions. [Accessed: 23-Mar-2020].

\bibitem{Knott}
R. Knott, “Egyptian Fractions,” www.maths.surrey.ac.uk, 21-Feb-2020. [Online]. Available: http://www.maths.surrey.ac.uk/hosted-sites/R.Knott/Fractions/egyptian.html. [Accessed: 23-Mar-2020].
\end{thebibliography}

\end{document}